\newcommand{\filebegin}{\begin{document}}
\newcommand{\fileend}{\end{document}}
\def\thefootnote{}
\newcommand{\lo}{\longrightarrow}
\newcommand{\NMM}{\hspace*{2mm}}
\renewcommand{\baselinestretch}{1.1}
\renewcommand{\baselinestretch}{1.1}
\def\n{\noindent}%
\numberwithin{equation}{section}
\def\mapdown#1{\Big\downarrow\rlap
{$\vcenter{\hbox{$\scriptstyle#1$}}$}}
\newcommand{\cmark}{\ding{51}}
\newcommand{\xmark}{\ding{55}}
\newcommand{\lr}{\longrightarrow}
\newcommand{\dis}{\di^{\circledast}}
\newcommand{\fo}{\mathcal{F}}
\newcommand{\frs}[2]{\hbox{$\ds \frac{#1}{#2}$}}
\newcommand{\op}{\overline{\parzial}}
\newcommand{\pa}{\partial}
\newcommand{\paa}[2]{\partial_{#1#2}}
\newcommand{\paaa}[2]{\partial_{#1#2^2}}
\newcommand{\paaaa}[3]{\partial_{#1#2#3}}
\newcommand{\ot}{\otimes}
\newcommand{\om}{\omega}
\newcommand{\hs}[1]{\hspace{#1cm}}
\newcommand{\ric}[4]{#1\omega_{#2}\circ\omega_{#3}\otimes\omega_{#4}}
\newcommand{\rics}[4]{#1\omega_{#2}\ot\omega_{#3}\otimes\omega_{#4}}
\theoremstyle{plain}
\newtheorem{theorem}{Theorem}[section]
\newtheorem*{theorem*}{Theorem}
\newtheorem{definition}[theorem]{Definition}
\newtheorem{lemma}[theorem]{Lemma}
\newtheorem{remark}{Remark}
\newtheorem{prop}[theorem]{Proposition}
\newtheorem{cor}[theorem]{Corollary}
\newtheorem{rem}[theorem]{Remark}
\newtheorem{ex}[theorem]{Example}
\newtheorem*{mt*}{Main Theorem}
\DeclareMathOperator{\di}{d}
\newcommand\C{{\mathbb C}}
\newcommand\f{\mathcal{F}}
\newcommand\g{{\mathfrak{g}}}
\newcommand\h{{\mathfrak{h}}}
\newcommand\ka{{\mathfrak{k}}}
\renewcommand\r{{\mathfrak{r}}}
\newcommand\su{{\mathfrak{su}}}
\renewcommand\k{{\kappa}}
\renewcommand\l{{\lambda}}
\newcommand\m{{\mathfrak{m}}}
\renewcommand\O{{\omega}}
\renewcommand\t{{\theta}}
\newcommand\ebar{{\bar{\varepsilon}}}
\newcommand\R{{\mathbb R}}
\newcommand\Z{{\mathbb Z}}
\newcommand\s{{\mathbb S}}
\newcommand{\de}[2]{\frac{\partial #1}{\partial #2}}
\newcommand\w{\wedge}
\newcommand{\ov}[1]{\overline{ #1}}
\newcommand{\Tk}{\mathcal{T}_{\omega}}
\newcommand{\ovp}{\overline{\partial}}
\newcommand\ad{{\rm ad}}
\newcommand{\D}{\mathcal D}
\newcommand{\A}{\mathcal A}
\newcommand{\B}{\mathcal B}
\newcommand{\G}[3]{\Gamma_{#1#2}^{#3}}
\renewcommand{\theequation}{\thesection.\arabic{equation}}
\begin{document}
\vspace*{2cm}
\begin{center}
{\bf\large On Lorentzian two-symmetric manifolds of dimension-four}
 \\[0.5cm]
{Amirhesam Zaeim, Mohammad Chaichi and Yadollah Aryanejad \\
Department of Mathematics, Payame noor University,\\ P.O. Box 19395-3697, Tehran, Iran\\ zaeim@pnu.ac.ir\\ chaichi@pnu.ac.ir \\ y.aryanejad@pnu.ac.ir} \\[2mm]

\end{center}
\vspace*{0.5cm}
\begin{quotation}
\noindent
{\footnotesize
{\sc Abstract.}
We study curvature properties of four-dimensional Lorentzian manifolds with two-symmetry property. We then consider Einstein-like metrics, Ricci solitons and homogeneity over these spaces.}
\end{quotation}
\ \\
{\bf Keywords:} Pseudo-Riemannian metric, Einstein-like metrics, Ricci soliton, Homogeneous space\\

\n \textbf{2000 Mathematics subject classification: } 53C50, 53C15, 53C25.

\markboth
{A.Zaeim, M.Chaichi and Y.Aryanejad}
 {On Lorentzian two-symmetric manifolds of dimension-four}

\section{Introduction}
Symmetries of the mathematical models have a lot of applications in applied sciences. For example, molecular symmetries  studied in \cite{iran1} and \cite{iran2}, obtaining the group of symmetries of the molecules.
 $K$-symmetry spaces are a natural generalization
of
symmetric manifolds. A (pseudo-) Riemannian space $(M, g)$ is called $k$-symmetric if the following condition is valid:
$$
\nabla^kR=0,\quad\nabla^{k-1}R\neq0,
$$
where $k\geq1$ and $R$ is the curvature tensor of $(M,g)$. In the Riemannian setting, contrary to the pseudo-Riemannian case, a $k$-symmetric space is necessarily locally symmetric, i.e., $\nabla R=0$ \cite{Ta}. Examples of pseudo-Riemannian $k$-symmetric spaces with $k\geq2$ can be found in \cite{Se,Bl,Ka}. Many interesting results about Lorentzian two-symmetric spaces were presented in \cite{Se}, in particular the author proved that any two-symmetric Lorentzian manifold admits a parallel null vector field. A classification of four-dimensional two-symmetric Lorentzian spaces is obtained in \cite{Bl}, based on the Petrov classification of the Weyl tensors, and it is shown that such spaces are some special pp-waves. For wide applications in physics, many authors studied pp-wave manifolds which are spacial kind of pr-wave spaces. In \cite{Ba}, local symmetry, conformal flatness, Einstein-like metrics and existence of non-trivial Ricci solitons studied on the conformally flat pr-wave manifolds. Homogeneous plane wave manifolds, other special kind of pr-waves, investigated in \cite{Blau} and one geodesically complete family of the spaces under consideration were found. The generalization of the results of \cite{Bl} is the subject of \cite{Al}, where it is proven that a locally indecomposable Lorentzian manifold of dimension $n+2$ is two-symmetric if and only if there exist local coordinates $(v,x^1,\dots,x^n,u)$ such that
\begin{equation}\label{g0}
g=2dvdu+\sum_{i=1}^n(dx^i)^2+(H_{ij}u+F_{ij})x^ix^j(du)^2,
\end{equation}
where $H_{ij}$ is a nonzero diagonal matrix with diagonal elements $\lambda_1\leq\dots\leq\lambda_n$, and $F_{ij}$ is a symmetric real matrix. According to this general form of Lorentzian two-symmetric manifolds, in the four-dimensional case, there exist local coordinates $(x^1,\dots,x^4)$ such that the metric $g$ of a Lorentzian two-symmetric space is
\begin{equation}\label{g}
g=2dx^1dx^4+(dx^2)^2+(dx^3)^2+\left(x^4(a(x^2)^2+b(x^3)^2)+p(x^2)^2+2qx^2x^3+s(x^3)^2\right)(dx^4)^2,
\end{equation}
where $a,b,p,q,s$ are real constants and $a^2+b^2\neq0$. Our main goal is to study some geometric properties of four-dimensional Lorentzian two-symmetric spaces.

This paper is organized in the following way. Curvature properties of Lorentzian two-symmetric four-spaces will be studied in the section two and Einstein-like metrics of the spaces under consideration is the subject of section three. Ricci solitons and homogeneous four-dimensional Lorentzian two-symmetric spaces will be studied in section four and five respectively.

\section{Two-symmetric Lorentzian four-manifolds}
The first step for study the geometry of  (pseudo-)Riemannian manifolds is to determine the Lievi-Civita connection. By using the {\em Koszul} identity $2g(\nabla_XY,Z)=Xg(Y,Z)+Yg(Z,X)-Zg(X,Y)-g(X,[Y,Z])+g(Y,[Z,X])+g(Z,[X,Y]),$ and applying the metric \eqref{g}, one can determine the components of the Levi-Civita connection. We use $\pa_i=\frac{\pa}{\pa x^i}$ as a local basis for the tangent space and have:
\vspace{.3cm}
\begin{theorem}
Let $(M,g)$ be an arbitrary two-symmetric Lorentzian four-manifold, where the metric $g$ is described in local coordinates $(x^1,x^2,x^3,x^4)$ by the Equation \eqref{g}. The non-zero components of the Levi-Civita connection are:
\begin{equation}
\begin{array}{l}
\nabla_{\pa_2}\pa_4=(ax^2x^4+px^2+qx^3)\pa_1,\\
\nabla_{\pa_3}\pa_4=(bx^3x^4+sx^3+qx^2)\pa_1,\\
\nabla_{\pa_4}\pa_4=\frac{a(x^2)^2+b(x^3)^2}{2}\pa_1-(ax^2x^4+px^2+qx^3)\pa_2-(bx^3x^4+qx^2+sx^3)\pa_3.
\end{array}
\end{equation}
\end{theorem}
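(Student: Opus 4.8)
The plan is to specialize the Koszul identity to the coordinate frame $\{\pa_i\}$. Since the fields $\pa_i$ commute, every bracket $[\pa_i,\pa_j]$ vanishes and the identity collapses to the classical Christoffel expression $\Gamma_{ij}^k=\frac12 g^{kl}(\pa_i g_{jl}+\pa_j g_{il}-\pa_l g_{ij})$, with $\nabla_{\pa_i}\pa_j=\Gamma_{ij}^k\pa_k$. The whole proof then rests on exploiting how sparse the metric \eqref{g} is.

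First I would record the covariant and contravariant components of $g$. Abbreviating the coefficient of $(dx^4)^2$ by $F=x^4(a(x^2)^2+b(x^3)^2)+p(x^2)^2+2qx^2x^3+s(x^3)^2$, the only nonconstant entry of $g$ is $g_{44}=F$, while the remaining nonzero entries are $g_{14}=g_{41}=1$ and $g_{22}=g_{33}=1$. Inverting the $2\times2$ block in the $(x^1,x^4)$ directions yields $g^{11}=-F$, $g^{14}=g^{41}=1$, $g^{44}=0$, together with $g^{22}=g^{33}=1$ and all other entries zero.

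The key observation is that, because $g_{44}$ is the \emph{only} metric component depending on the coordinates, the only nonvanishing first derivatives are $\pa_2 g_{44}=2(ax^2x^4+px^2+qx^3)$, $\pa_3 g_{44}=2(bx^3x^4+qx^2+sx^3)$ and $\pa_4 g_{44}=a(x^2)^2+b(x^3)^2$. Substituting into the Christoffel formula, a derivative term survives only when the differentiated component is $g_{44}$; tracing the index pattern this forces shows that $\Gamma_{ij}^k$ can be nonzero only for the unordered pairs $(i,j)\in\{(2,4),(3,4),(4,4)\}$, all other pairs producing identically zero. For the two mixed pairs only the factor $g^{k4}=\delta^k_1$ contributes, giving $\Gamma_{24}^1=ax^2x^4+px^2+qx^3$ and $\Gamma_{34}^1=bx^3x^4+qx^2+sx^3$, i.e. the first two displayed lines. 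For the pair $(4,4)$ the three contractions against $g^{k4}$, $g^{k2}$, $g^{k3}$ separate out the $\pa_1$, $\pa_2$ and $\pa_3$ components.

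I expect no genuine obstacle here: every step is an explicit substitution. The one point demanding care is the bookkeeping for $\Gamma_{44}^k$, where two of the three terms of the Christoffel expression coincide (both equal $\pa_4 g_{44}$) while the third enters with a minus sign, and where each surviving metric derivative must be paired with the correct inverse-metric entry. Keeping track of the signs and the factor $\frac12$ at this stage is precisely what produces the coefficient $\frac{a(x^2)^2+b(x^3)^2}{2}$ on $\pa_1$ and the two negative coefficients on $\pa_2$ and $\pa_3$ in the final line of the statement.
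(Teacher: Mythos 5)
Your proposal is correct and follows essentially the same route as the paper, which likewise derives the components by applying the Koszul identity to the metric \eqref{g} in the coordinate frame (where it reduces to the Christoffel formula you use); your inverse metric, the three derivatives of $g_{44}$, and the resulting $\Gamma^1_{24}$, $\Gamma^1_{34}$, $\Gamma^1_{44}$, $\Gamma^2_{44}$, $\Gamma^3_{44}$ all check out. The only difference is that you spell out the bookkeeping the paper leaves implicit.
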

Applying the relation $R(X,Y)=[\nabla_X,\nabla_Y]-\nabla_{[X,Y]}$ we immediately determine the curvature tensor. If we  set $R(\pa_k,\pa_l)\pa_j=R^i_{jkl}\pa_i$, then by contraction on the first and  third indices of the curvature tensor, the Ricci tensor $\varrho$ will be deduced. The scalar curvature tensor $\tau$ is also obtained by full contraction of coefficients of the curvature tensor.
\vspace{.3cm}
\begin{theorem}
A four-dimensional two-symmetric Lorentzian space admits zero scalar curvature. Also, the non-zero components of curvature tensor and Ricci tensor are
\begin{equation}\label{R}
\begin{array}{l}
R(\pa_2,\pa_4)=(ax^4+p)\pa_1dx^2+q\pa_1dx^3-(ax^4+p)\pa_2dx^4-q\pa_3dx^4,\\
R(\pa_3,\pa_4)=q\pa_1dx^2+(bx^4+s)\pa_1dx^3-q\pa_2dx^4-(bx^4+s)\pa_3dx^4,\\
\\
\varrho(\pa_4,\pa_4)=-(a+b)x^4-(s+p).
\end{array}
\end{equation}
\end{theorem}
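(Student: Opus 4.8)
The plan is to feed the connection coefficients from the preceding theorem into the curvature formula and then contract twice. Since the $\pa_i$ are coordinate vector fields they commute, so $[\pa_k,\pa_l]=0$ and the operator reduces to $R(\pa_k,\pa_l)=\nabla_{\pa_k}\nabla_{\pa_l}-\nabla_{\pa_l}\nabla_{\pa_k}$. The first thing I would record is that every nonzero Christoffel symbol in the previous theorem carries its differentiating index in $\{2,3,4\}$ and none involves the index $1$; consequently $\nabla_{\pa_1}\pa_j=0$ and $\nabla_{\pa_j}\pa_1=0$ for all $j$, so any curvature operator having a slot equal to $\pa_1$ vanishes. This immediately restricts the possibly nonzero operators to $R(\pa_2,\pa_3)$, $R(\pa_2,\pa_4)$ and $R(\pa_3,\pa_4)$.

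Next I would compute these three operators by applying them to the basis. For $R(\pa_2,\pa_3)$ the only nontrivial evaluation is on $\pa_4$, where the two terms each produce $q\pa_1$ (from $\pa_2$ of the $\pa_3\pa_4$-symbol and $\pa_3$ of the $\pa_2\pa_4$-symbol) and hence cancel, giving $R(\pa_2,\pa_3)=0$. For $R(\pa_2,\pa_4)$ and $R(\pa_3,\pa_4)$ the computation is a direct differentiation of the coefficients appearing in $\nabla_{\pa_2}\pa_4$, $\nabla_{\pa_3}\pa_4$ and $\nabla_{\pa_4}\pa_4$: the $x^4$-dependent coefficients differentiate to the constants $a,b$ and the $x^2,x^3$-dependent ones to $p,q,s$, producing exactly the four terms displayed in each line of \eqref{R}. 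The only place where care is needed is the cancellation of the spurious $ax^2\pa_1$ contribution in $R(\pa_2,\pa_4)\pa_4$ (and of its analogue $bx^3\pa_1$ in $R(\pa_3,\pa_4)\pa_4$), which arises equally from $\nabla_{\pa_2}\nabla_{\pa_4}\pa_4$ and from $\nabla_{\pa_4}\nabla_{\pa_2}\pa_4$.

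To obtain the Ricci tensor I would contract on the first and third indices, $\varrho_{jl}=R^i_{jil}$, that is, sum over $i$ the $\pa_i$-coefficient of $R(\pa_i,\pa_l)\pa_j$. Because the only operators surviving the first step are $R(\pa_2,\pa_4)$ and $R(\pa_3,\pa_4)$, the sole index pattern producing a nonzero entry is $l=j=4$: the $\pa_2$-coefficient of $R(\pa_2,\pa_4)\pa_4$ contributes $-(ax^4+p)$ and the $\pa_3$-coefficient of $R(\pa_3,\pa_4)\pa_4$ contributes $-(bx^4+s)$, whose sum is $\varrho(\pa_4,\pa_4)=-(a+b)x^4-(s+p)$. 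A short check of the remaining index patterns against the explicit operators shows that every other $\varrho_{jl}$ vanishes.

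Finally, for the scalar curvature I would invert the metric \eqref{g}. Writing $W$ for the coefficient of $(dx^4)^2$, the coordinates split into the null pair $(x^1,x^4)$, whose block $\begin{pmatrix}0&1\\1&W\end{pmatrix}$ has inverse $\begin{pmatrix}-W&1\\1&0\end{pmatrix}$, and the Euclidean pair $(x^2,x^3)$; in particular $g^{44}=0$. Since $\varrho$ has only the $(4,4)$-entry, the full contraction collapses to $\tau=g^{44}\varrho_{44}=0$. Thus the vanishing of the scalar curvature is forced by the null direction $\pa_1$ rather than by any cancellation among Ricci components, and no genuine obstacle arises beyond the bookkeeping above.
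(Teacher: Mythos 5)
Your proposal is correct and takes essentially the same approach as the paper, which simply evaluates $R(\pa_k,\pa_l)=[\nabla_{\pa_k},\nabla_{\pa_l}]-\nabla_{[\pa_k,\pa_l]}$ on the connection components of the preceding theorem, contracts on the first and third indices to get $\varrho$, and fully contracts (using $g^{44}=0$ in the inverse metric) to get $\tau=0$. Your write-up merely makes explicit the bookkeeping the paper leaves implicit: the vanishing of all operators with a $\pa_1$ slot, the $q\pa_1$ cancellation in $R(\pa_2,\pa_3)$, and the cancellation of the $ax^2\pa_1$ (resp.\ $bx^3\pa_1$) terms, all of which check out.
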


A (pseudo-)Riemannian manifold $(M,g)$ is called {\em Einstein} if $\varrho=cg$, for a real constant $c$. Being {\em Ricci flat} means that the Ricci tensor vanishes identically. Also, {\em conformal flatness} translates into the following system of algebraic equations:
\begin{eqnarray}\label{W}
&W_{ijkh} = R_{ijkh}-\frac{1}{2}(g_{ik}\varrho_{jh}+g_{jh}\varrho_{ik}-g_{ih}\varrho_{jk}-g_{jk}\varrho_{ih})
+\frac{r}{6}(g_{ik}g_{jh}-g_{ih}g_{jk})=0 \label{cflat}\\[6 pt]
&\text{for all indices} \; i,j,k,h=1,\dots,4, \hspace{210pt} \nonumber
\end{eqnarray}
where $W$ denotes the {\em Weyl tensor} and $\tau$ is the scalar curvature. Although two-symmetric spaces clearly aren't flat,  we can check Ricci flatness.
\vspace{.3cm}
\begin{theorem}
Let $(M,g)$ be a two-symmetric four-dimensional Lorentzian manifold such that the metric $g$ is described by the Equation \eqref{g} in local coordinates $(x^1,x^2,x^3,x^4)$. The following statements satisfy:
\begin{itemize}
\item[a)] $(M,g)$ is Einstein if and only if be Ricci flat if and only if $b=-a,\ s=-p$.
\item[b)] $(M,g)$ is conformally flat if and only if $b=a,\ s=p,\ q=0$.
\end{itemize}
\end{theorem}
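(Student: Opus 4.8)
The plan is to treat the two parts separately, using in both cases that by the previous theorem the scalar curvature vanishes and the Ricci tensor has the single nonzero component $\varrho(\pa_4,\pa_4)=-(a+b)x^4-(s+p)$.

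For part a), the implication ``Ricci flat $\Rightarrow$ Einstein'' is trivial (take $c=0$), so it suffices to show that the Einstein condition forces $c=0$ and that Ricci flatness is equivalent to the stated relations. If $\varrho=cg$, then evaluating on the pair $(\pa_2,\pa_2)$ gives $0=\varrho(\pa_2,\pa_2)=c\,g(\pa_2,\pa_2)=c$, since from \eqref{g} we have $g(\pa_2,\pa_2)=1$ while $\pa_2$ does not occur in $\varrho$. Hence $c=0$ and the Einstein condition coincides with Ricci flatness. Finally $\varrho\equiv0$ means $-(a+b)x^4-(s+p)=0$ for every value of $x^4$; reading the left-hand side as a polynomial in $x^4$, this holds identically if and only if $a+b=0$ and $s+p=0$, that is $b=-a$ and $s=-p$.

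For part b) I would compute the Weyl tensor from \eqref{cflat}. Since $\tau=0$, the last term drops and the defining relation reduces to $W_{ijkh}=R_{ijkh}-\tfrac12(g_{ik}\varrho_{jh}+g_{jh}\varrho_{ik}-g_{ih}\varrho_{jk}-g_{jk}\varrho_{ih})$. To use it I first lower the first index of the curvature operators in \eqref{R}; because $g_{14}=g_{22}=g_{33}=1$ are the only relevant metric entries, this yields the covariant components $R_{2424}=-(ax^4+p)$, $R_{3434}=-(bx^4+s)$ and $R_{2434}=R_{3424}=-q$, together with those obtained from them by the curvature symmetries. The correction term is governed by the single entry $\varrho_{44}$: it is skew in the last two (and in the first two) indices, and a factor $\varrho_{44}$ can only be paired with a nonzero metric coefficient, so its only surviving independent contributions are $\tfrac12 g_{22}\varrho_{44}$ and $\tfrac12 g_{33}\varrho_{44}$ entering $W_{2424}$ and $W_{3434}$. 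Carrying out the substitution gives $W_{2424}=\tfrac{b-a}{2}x^4+\tfrac{s-p}{2}$, $W_{3434}=\tfrac{a-b}{2}x^4+\tfrac{p-s}{2}$ and $W_{2434}=-q$. Each must vanish identically as a function on $M$, so reading $W_{2424}$ as a polynomial in $x^4$ forces $a=b$ and $p=s$, while $W_{2434}=0$ forces $q=0$; this proves necessity.

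The step I expect to require the most care is the converse, namely checking that once $a=b$, $p=s$, $q=0$ are imposed \emph{every} remaining component of $W$ vanishes, not merely the three used above. Here I would argue structurally rather than by brute force: by the skew symmetries each component is determined by an unordered pair of index-pairs, $R_{ijkh}$ is supported only on pairs drawn from $\{2,4\}$ and $\{3,4\}$, and the correction term is supported only where a nonzero $g_{ik}$ meets the index pattern $(4,4)$ of $\varrho$. Verifying that all index combinations outside $W_{2424}$, $W_{3434}$, $W_{2434}$ yield $R_{ijkh}=0$ and zero correction simultaneously then completes the argument. This bookkeeping over index types is the only genuinely laborious point; the individual evaluations are immediate.
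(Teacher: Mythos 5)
Your proof is correct and follows essentially the same route as the paper: for a) imposing $\varrho=cg$ componentwise to force $c=0$ and then $a+b=s+p=0$, and for b) computing the Weyl tensor from \eqref{cflat} with $\tau=0$ to obtain exactly the paper's nonzero components $W_{2424}=-W_{3434}=\frac{(b-a)x^4+s-p}{2}$ and $W_{2434}=-q$. Your additional bookkeeping (extracting $c=0$ from $\varrho_{22}=0$, $g_{22}=1$, and verifying that all remaining Weyl components vanish under $a=b$, $p=s$, $q=0$) merely makes explicit what the paper leaves implicit.
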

\begin{proof}
Let $(M,g)$ be an Einstein manifold. Using  the Equation \eqref{R} we set $\varrho=cg$. The following relations must be  established.
$$
c=(a+b)x^4+s+p=0.
$$
So the Einstein property is equivalent to satisfying $b=-a$, $s=-p$ and $c=0$ which is clearly equivalent to Ricci flatness. Using the Equation \eqref{W}, the non-zero components of the Weyl tensor would be
$$
\begin{array}{ll}
W_{2424}=-W_{3434}=\frac{(b-a)x^4+s-p}{2},& W_{2434}=-q,\\
\end{array}
$$
it is obvious that the Weyl tensor vanishes if and only if $b=a$, $s=p$ and $q=0$.
\end{proof}

\section{Einstein-like Lorentzian two-symmetric spaces}

Two new classes of Riemannian manifolds which are defined through conditions on the Ricci tensor, introduced by A. Gray in \cite{Gr}. These types of manifolds which are famous as $\A$ and $\B$ classes, would be extended at once to the pseudo-Riemannian geometry. $\A$ and $\B$ classes are defined in the following way:

{\bf Class $\A$:} a pseudo-Riemannian manifold $(M, g)$ belongs to class $\A$ if and only if its Ricci tensor $\varrho$ is {\em cyclic-parallel}, that is,
\begin{equation}\label{cyclic}
(\nabla_X \varrho)(Y,Z)+(\nabla_Y\varrho)(Z,X)+(\nabla_Z\varrho)(X,Y)=0,
\end{equation}
for all vector fields $X$, $Y$ and $Z$  on $M$. The Equation \eqref{cyclic} is equivalent to requiring that $\varrho$ is a {\em Killing tensor}, that is,
\begin{equation}\label{Killing}
(\nabla_X\varrho)(X,X)=0.
\end{equation}

To note that Equation \eqref{Killing}, also known as the first odd Ledger condition, is a necessary condition for a (pseudo-)Riemannian manifold to be a D'Atri space. Hence, identifying two-symmetric manifolds of a given dimension satisfying \eqref{Killing}, is the first step to understand D'Atri spaces.

{\bf Class $\B$:} a pseudo-Riemannin manifold $(M,g)$ belongs to class $\B$ if and only if its Ricci tensor be a {\em Codazzi tensor}, that is,
\begin{equation}\label{Codazzi}
(\nabla_X\varrho)(Y,Z)=(\nabla_Y\varrho)(X,Z).
\end{equation}

A pseudo-Riemnnain manifold which belongs to one of the above classes is called an {\em Einstein-like manifold}. We denote the class of Ricci parallel, Einstein and manifolds with constant scalar curvature by $\mathcal P$, $\mathcal E$ and $\mathcal C$ respectively. One can easily see that the intersection of two Einstein-like classes consists of Ricci parallel manifolds. This situation can be summarized in the following diagram:
$$
\begin{array}{ccc}
&\subset\A&\\
\mathcal E\subset\mathcal P=\A\cap\B&&\subset\A\cup\B\subset\mathcal C.\\
&\subset\B&
\end{array}
$$
Einstein-like metrics are deeply investigated through the different kinds of homogeneous spaces in both Riemannian and pseudo-Riemannian signatures. Three-dimensional Riemannian homogeneous spaces studied in \cite{Ab}. In \cite{Bu}, authors study three- and four-dimensional Einstein-like Riemannian manifolds which are Ricci-curvature homogeneous. They could completely classify three-dimensional case of the mentioned spaces, while in the four-dimensional case, they partially classified the special case where the manifold is locally homogeneous. They also presented explicit examples of four-dimensional locally homogeneous Riemannian manifolds whose Ricci tensor is cyclic-parallel and has distinct eigenvalues. These examples invalidated the expectation stated by F. Podest{\'a} and A. Spiro in \cite{Po}. Three-dimensional ball-homogeneous spaces, semi-symmetric spaces, Sasakian spaces and three-dimensional contact metric manifolds are other Riemannian classes which were the subject of research for the Einstein-like properties \cite{Ca1,Bo,Ab1,Ca2}.
\vspace{.3cm}
\begin{theorem}\label{A}
Every four-dimensional two-symmetric Lorentzian manifold $(M,g)$ belongs to class $\A$, if and only if $b=-a$.
\end{theorem}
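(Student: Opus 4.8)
The plan is to compute $\nabla\varrho$ explicitly and substitute it into the Killing condition \eqref{Killing}, exploiting the extreme simplicity of the Ricci tensor found above. The key structural facts are two. First, from the expression for the Ricci tensor, $\varrho$ has a single non-zero component $\varrho(\pa_4,\pa_4)=-(a+b)x^4-(s+p)$, so $\varrho$ is supported entirely on $\pa_4\otimes\pa_4$. Second, from the Levi-Civita connection determined above, every covariant derivative $\nabla_{\pa_i}\pa_j$ lies in the span of $\pa_1,\pa_2,\pa_3$; no such derivative ever produces a $\pa_4$-component.

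First I would write the covariant derivative componentwise,
$$(\nabla_{\pa_i}\varrho)(\pa_j,\pa_k)=\pa_i\big(\varrho(\pa_j,\pa_k)\big)-\varrho(\nabla_{\pa_i}\pa_j,\pa_k)-\varrho(\pa_j,\nabla_{\pa_i}\pa_k).$$
Each of the two correction terms evaluates $\varrho$ with one argument of the form $\nabla_{\pa_i}\pa_\bullet$ and the other a coordinate field; since the former has no $\pa_4$-component while $\varrho$ is supported on $\pa_4\otimes\pa_4$, both correction terms vanish identically. Hence $(\nabla_{\pa_i}\varrho)(\pa_j,\pa_k)=\pa_i\big(\varrho(\pa_j,\pa_k)\big)$, and the only surviving component is
$$(\nabla_{\pa_4}\varrho)(\pa_4,\pa_4)=\pa_4\big(-(a+b)x^4-(s+p)\big)=-(a+b).$$

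Next I would feed this into the Killing condition. Writing $X=\sum_i X^i\pa_i$ and using that the $(0,3)$-tensor $\nabla\varrho$ evaluated on $(X,X,X)$ equals $X^iX^jX^k(\nabla_{\pa_i}\varrho)(\pa_j,\pa_k)$, only the index triple $(4,4,4)$ contributes, giving
$$(\nabla_X\varrho)(X,X)=-(a+b)(X^4)^3.$$
This vanishes for every $X$ precisely when $a+b=0$, i.e.\ $b=-a$; equivalently, the cyclic form \eqref{cyclic} collapses to the single scalar relation $-3(a+b)=0$, yielding the same condition. This establishes the claimed equivalence.

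I expect no serious obstacle: the entire argument rests on the observation that the connection never outputs a $\pa_4$-direction, which collapses $\nabla\varrho$ to one component. The only point requiring care is to confirm that this vanishing of the correction terms is genuinely independent of the coordinate-dependence of both $\varrho$ and the Christoffel symbols, so that no hidden term re-enters the Killing identity.
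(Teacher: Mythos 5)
Your proposal is correct and follows essentially the same route as the paper: the paper's proof simply asserts ``by straightforward calculations'' that $(\nabla_v\varrho)(v,v)=-(v^4)^3(a+b)$ for an arbitrary vector field $v$ and concludes $b=-a$, which is exactly the quantity you derive. Your added structural observation---that the connection components never produce a $\pa_4$-direction while $\varrho$ is supported on $dx^4\otimes dx^4$, so the correction terms in $\nabla\varrho$ vanish and only $(\nabla_{\pa_4}\varrho)(\pa_4,\pa_4)=-(a+b)$ survives---is a valid and welcome justification of the computation the paper leaves implicit.
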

\begin{proof}
Let $v=v^1\pa_1+v^2\pa_2+v^3\pa_3+v^4\pa_4$ be an arbitrary vector space on $(M,g)$, where $v^1,\dots,v^4$ are smooth functions on $M$. As mentioned before, $(M,g)$ belongs to class $\A$ of Einstein-like manifolds if and only if the Equation \eqref{Killing} satisfies. By straight forward calculations it is implied that
$$
(\nabla_v\varrho)(v,v)=-(v^4)^3(a+b).
$$
Thus, $(M,g)$ belongs to class $\A$ if and only if $b=-a$.
\end{proof}
\vspace{.3cm}
\begin{theorem}
Every four-dimensional two-symmetric Lorentzian manifold $(M,g)$ belongs to class $\B$ of the Einstein-like manifolds.
\end{theorem}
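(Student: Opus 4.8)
The plan is to verify the Codazzi condition \eqref{Codazzi} directly in the coordinate frame $\{\pa_1,\pa_2,\pa_3,\pa_4\}$, exploiting how few components of $\varrho$ and of $\nabla$ are actually nonzero. First I would recall from \eqref{R} that the Ricci tensor has a single nonvanishing component, $\varrho(\pa_4,\pa_4)=-(a+b)x^4-(s+p)$, a function of $x^4$ alone; write $f:=\varrho(\pa_4,\pa_4)$.

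Next I would record the structural observation that drives everything: reading off the Levi-Civita connection determined above, every term $\nabla_{\pa_i}\pa_j$ lies in the span of $\pa_1,\pa_2,\pa_3$, so no Christoffel symbol of the form $\Gamma^4_{ij}$ is nonzero. Expanding $(\nabla_{\pa_i}\varrho)(\pa_j,\pa_k)=\pa_i\big(\varrho(\pa_j,\pa_k)\big)-\varrho(\nabla_{\pa_i}\pa_j,\pa_k)-\varrho(\pa_j,\nabla_{\pa_i}\pa_k)$, both correction terms vanish: $\varrho$ pairs nontrivially only against $\pa_4$, yet $\nabla_{\pa_i}\pa_j$ carries no $\pa_4$ component. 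Hence $(\nabla_{\pa_i}\varrho)(\pa_j,\pa_k)=\pa_i\big(\varrho(\pa_j,\pa_k)\big)$ for all $i,j,k$.

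From this reduction the covariant derivative of $\varrho$ is immediate: the right-hand side is nonzero only when $\varrho(\pa_j,\pa_k)=f$, forcing $j=k=4$, and simultaneously $\pa_i f\neq 0$, forcing $i=4$ (since $f$ depends only on $x^4$). Thus the unique nonvanishing component is $(\nabla_{\pa_4}\varrho)(\pa_4,\pa_4)=\pa_4 f=-(a+b)$.

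Finally I would check \eqref{Codazzi} componentwise, that is, $(\nabla_{\pa_i}\varrho)(\pa_j,\pa_k)=(\nabla_{\pa_j}\varrho)(\pa_i,\pa_k)$ for all $i,j,k$. By the previous step each side vanishes unless its triple of indices equals $(4,4,4)$, and in that single case the two sides coincide. The symmetry therefore holds identically, with no restriction on the constants, so $(M,g)$ always belongs to class $\B$. There is no genuine obstacle here: once one notices that $\Gamma^4_{ij}\equiv 0$ the connection corrections drop out and the claim reduces to the trivial symmetry of a tensor supported on the single diagonal slot $(4,4)$; the only care required is the bookkeeping of which index triples can survive.
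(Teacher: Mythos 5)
Your proof is correct and follows essentially the same route as the paper: a direct computation showing that the only nonvanishing component of $\nabla\varrho$ is $(\nabla_{\pa_4}\varrho)(\pa_4,\pa_4)=-(a+b)$, which agrees with the paper's formula $(\nabla_u\varrho)(v,w)=(\nabla_v\varrho)(u,w)=-u^4v^4w^4(a+b)$ and is manifestly symmetric, so the Codazzi condition holds with no constraint on the constants. Your observation that $\Gamma^4_{ij}\equiv 0$ merely makes explicit why the connection correction terms drop out in the paper's ``direct calculations,'' rather than constituting a different method.
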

\begin{proof}
Let $v=\sum_{i=1}^4 v^i\pa_i$, $u=\sum_{i=1}^4 u^i\pa_i$ and $w=\sum_{i=1}^4w^i\pa_i$ be three arbitrary smooth vector fields on $(M,g)$. Every  two-symmetric space $(M,g)$ belongs to class $\B$ of Einstein-like manifolds if and only if the Equation \eqref{Codazzi} satisfies. Direct calculations yield that
$$
(\nabla_u\varrho)(v,w)=(\nabla_v\varrho)(u,w)=-u^4v^4w^4(a+b),
$$
and so, the Equation \eqref{Codazzi} always establishes and proves the claim.
\end{proof}
\section{Two-symmetric Lorentzian Ricci solitons}
We now report some basic information on Ricci solitons, referring to \cite{Cao} for a survey and further references. A \emph{Ricci soliton} is a pseudo-Riemannian manifold $(M,g)$  admitting a smooth vector field $V$, such that
\begin{equation}\label{soliton}
\mathcal{L}_V g+\varrho=\lambda g,
\end{equation}
where $\mathcal{L}$ denotes the Lie derivative and $\lambda$ a real constant.
A Ricci soliton is said to be \emph{shrinking}, \emph{steady} or
\emph{expanding} depending on whether $\lambda>0$, $\lambda=0$ or $\lambda<0$, respectively. Ricci solitons are  self-similar solutions of the \emph{Ricci flow}.

Originally introduced in the Riemannian case, Ricci solitons have been intensively studied in pseudo-Riemannian settings in recent years. The Ricci soliton equation is also a special case of Einstein field equations. 
\vspace{.3cm}
\begin{theorem}
Every four-dimensional two-symmetric Lorentzian manifold $(M,g)$ is  shrinking, expanding and steady Ricci soliton.
\end{theorem}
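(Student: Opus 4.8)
The plan is to prove the statement constructively: for \emph{every} real number $\lambda$ I will exhibit an explicit vector field $V$ satisfying the soliton equation \eqref{soliton}, which at once yields the shrinking ($\lambda>0$), steady ($\lambda=0$) and expanding ($\lambda<0$) cases. The only data needed are the metric components $g_{14}=1$, $g_{22}=g_{33}=1$, $g_{44}=x^4(a(x^2)^2+b(x^3)^2)+p(x^2)^2+2qx^2x^3+s(x^3)^2$ (all others zero) and, from the second theorem, the single nonzero Ricci component $\varrho(\pa_4,\pa_4)=-(a+b)x^4-(s+p)$.

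First I would expand $(\mathcal L_V g)_{ij}=V^k\pa_k g_{ij}+g_{kj}\pa_i V^k+g_{ik}\pa_j V^k$ and impose \eqref{soliton} componentwise, so that $(\mathcal L_V g)_{ij}=\lambda g_{ij}-\varrho_{ij}$. The equations indexed $(1,1),(1,2),(1,3),(2,3)$ together with the vanishing of the remaining off-diagonal components force $V^4$ to depend on $x^4$ alone and $V^1$ to be independent of $x^2,x^3$, while the equations $(2,2)$ and $(3,3)$ give $\pa_2 V^2=\pa_3 V^3=\tfrac{\lambda}{2}$. This leads naturally to the ansatz
$$V=\bigl(\lambda x^1+h(x^4)\bigr)\pa_1+\tfrac{\lambda}{2}x^2\pa_2+\tfrac{\lambda}{2}x^3\pa_3,$$
for a single unknown function $h$, which automatically annihilates every off-diagonal component and satisfies the $(1,4),(2,2),(3,3)$ equations.

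The decisive step is the $(4,4)$-equation, which is where I expect the real work to lie. Writing $P=g_{44}$, a direct computation gives $(\mathcal L_V g)_{44}=V^2\pa_2 P+V^3\pa_3 P+2\pa_4 V^1$, and the point is that the quadratic terms in $x^2,x^3$ produced by $V^2\pa_2 P+V^3\pa_3 P=\tfrac{\lambda}{2}x^2\,\pa_2 P+\tfrac{\lambda}{2}x^3\,\pa_3 P$ reassemble \emph{exactly} into $\lambda P=\lambda g_{44}$, matching the $\lambda g_{44}$ on the right-hand side of \eqref{soliton}. After this cancellation only the term $2\pa_4 V^1=2h'(x^4)$ survives against $-\varrho_{44}=(a+b)x^4+(s+p)$, so the equation reduces to the single ordinary condition $2h'(x^4)=(a+b)x^4+(s+p)$, solved by $h(x^4)=\tfrac{a+b}{4}(x^4)^2+\tfrac{s+p}{2}x^4$. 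Since $\lambda$ was arbitrary, the resulting $V$ realizes \eqref{soliton} for every sign of $\lambda$, proving that $(M,g)$ is simultaneously a shrinking, steady and expanding Ricci soliton. The main obstacle is precisely verifying the degree-two cancellation $V^2\pa_2 P+V^3\pa_3 P=\lambda P$; once it is confirmed, everything else is linear bookkeeping.
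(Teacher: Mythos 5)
Your proposal is correct and follows essentially the same route as the paper: your vector field $V=\bigl(\lambda x^1+\tfrac{a+b}{4}(x^4)^2+\tfrac{s+p}{2}x^4\bigr)\pa_1+\tfrac{\lambda}{2}x^2\pa_2+\tfrac{\lambda}{2}x^3\pa_3$ is exactly the paper's solution with $c=\lambda/2$, and your key cancellation $V^2\pa_2 g_{44}+V^3\pa_3 g_{44}=\lambda g_{44}$ is valid (it is the Euler identity for $g_{44}$, which is homogeneous of degree two in $x^2,x^3$). The only difference is presentational: you make explicit the structural reason the $(4,4)$-equation closes, whereas the paper simply records the solved PDE system.
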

\begin{proof}
Let $(M,g)$ be a four-dimensional two-symmetric Lorentzian manifold, where $g$ is described by the Equation \eqref{g}. Suppose that $v=\sum_{i=1}^4v^i\pa_i$ is a smooth vector field on $(M,g)$ such that the Equation \eqref{soliton} satisfies for a real constant $\lambda$. The Lie derivative of $g$ in direction $v$ is

\begin{tabular}{lp{12cm}}
$\mathcal L_vg=$&$2\pa_1v^4(dx^1)^2+2(\pa_2v^4+\pa_1v^2)dx^1dx^2+2(\pa_3v^4+\pa_1v^3)dx^1dx^3$\\
&$+2(\pa_1x^1+\pa_4v^4+\pa_1v^4(a(x^2)^2x^4+p(x^2)^2+2qx^2x^3+b(x^3)^2x^4+s(x^3)^2))dx^1dx^4
+2\pa_2v^2(dx^2)^2+2(\pa_2v^3+\pa_3v^2)dx^2dx^3
+2(\pa_2v^4(a(x^2)^2x^4+p(x^2)^2+2qx^2x^3+b(x^3)^2x^4+s(x^3)^2+1)+\pa_2v^1)dx^2dx^4
+2\pa_3v^3(dx^3)^2+2(\pa_3v^4(a(x^2)^2x^4+p(x^2)^2+2qx^2x^3+b(x^3)^2x^4+s(x^3)^2+1)+\pa_3v^1)dx^3dx^4
+(\pa_4v^4(2a(x^2)^2x^4+2p(x^2)^2+4qx^2x^3+2b(x^3)^2(x^4)+2s(x^3)^2)+2\pa_4v^1+2av^2x^2x^4+2pv^2x^2
+2qv^2x^3+2qv^3x^2+2bv^3x^3x^4+2sv^3x^3+av^4(x^2)^2+v^4b(x^3)^2)(dx^4)^2.$
\end{tabular}

Applying Equations \eqref{g} and \eqref{R} in the Ricci soliton Equation \eqref{soliton}, we have a system of PDEs which admits the following solution
$$
\left\{\begin{array}{l}
\lambda=2c,\\
v^1=\frac{(x^4)^2}{4}(a+b)+\frac{x^4}{2}(s+p)+2cx^1,\\
v^2=cx^2,\\
v^3=cx^3,\\
v^4=0,
\end{array}\right.
$$
for a real constant $c$. Since $c$ is arbitrary, $(M,g)$ can be an expanding, shrinking or steady Ricci soliton.
\end{proof}
A Ricci soliton $(M,g)$ is called {\em gradient} if  the  Equation~\eqref{soliton} holds for a vector field $X = {\rm grad} f$, for some potential function $f$. In this case, \eqref{soliton} can be rewritten as
$$
2{\rm Hes}_f  +\varrho =  \lambda g,
$$
where ${\rm Hes}_f$ denotes the Hessian of $f$. Studying locally conformally flat Lorentzian gradient Ricci solitons, as well as quasi-Einstein spaces, in \cite{Br} and \cite{Br1} proved that such spaces are locally isometric to a plane-wave, if the gradient of the potential function is null.
\vspace{.3cm}
\begin{theorem}\label{thm3}
Every four-dimensional two-symmetric Lorentzian space $(M,g)$ is a gradient Ricci soliton if and only if be a steady Ricci soliton. In this case, the potential function is $f=\frac{a+b}{12}(x^4)^3+\frac{p+s}{4}(x^4)^2+c_1x^4+c_2$, for arbitrary real constants $c_1,c_2$.
\end{theorem}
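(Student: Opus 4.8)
The plan is to write the gradient soliton equation $2\,{\rm Hes}_f+\varrho=\lambda g$ in the coordinate frame $\{\pa_1,\dots,\pa_4\}$ and exploit, on the one hand, that by Theorem 2.2 the only non-vanishing Ricci component is $\varrho(\pa_4,\pa_4)=-(a+b)x^4-(s+p)$, and on the other hand that the only nonzero connection coefficients are those recorded in Theorem 2.1 (all of which carry a lower index $4$). Using $({\rm Hes}_f)_{ij}=\pa_i\pa_j f-\Gamma_{ij}^k\,\pa_k f$, each of the ten independent components of the soliton equation becomes a scalar PDE for $f$, and most of them are very simple because the metric and the Ricci tensor have so few nonzero entries.

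First I would extract the \emph{transverse} equations (those not involving only $x^4$). The $(1,1),(1,2),(1,3)$ components give $\pa_1^2 f=\pa_1\pa_2 f=\pa_1\pa_3 f=0$, so $\pa_1 f$ depends on $x^4$ alone; the $(1,4)$ component $2\pa_1\pa_4 f=\lambda$ then forces $\pa_1 f=\tfrac{\lambda}{2}x^4+c$ for a constant $c$. The $(2,2),(3,3),(2,3)$ components give $\pa_2^2 f=\pa_3^2 f=\tfrac{\lambda}{2}$ and $\pa_2\pa_3 f=0$, whence $\pa_2\pa_4 f$ and $\pa_3\pa_4 f$ are functions of $x^4$ only. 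However, the $(2,4)$ and $(3,4)$ components read $\pa_2\pa_4 f=((ax^4+p)x^2+qx^3)\,\pa_1 f$ and $\pa_3\pa_4 f=((bx^4+s)x^3+qx^2)\,\pa_1 f$, i.e.\ $((ax^4+p)x^2+qx^3)(\tfrac{\lambda}{2}x^4+c)$ and $((bx^4+s)x^3+qx^2)(\tfrac{\lambda}{2}x^4+c)$ respectively.

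The main step is then purely algebraic. Comparing the two descriptions, the right-hand sides must be independent of $x^2,x^3$, which, read as polynomial identities, forces $(\tfrac{\lambda}{2}x^4+c)(ax^4+p)\equiv 0$, $(\tfrac{\lambda}{2}x^4+c)(bx^4+s)\equiv 0$ and $q(\tfrac{\lambda}{2}x^4+c)\equiv 0$. If $\tfrac{\lambda}{2}x^4+c$ were not the zero polynomial, these would yield $a=b=p=q=s=0$, contradicting the hypothesis $a^2+b^2\neq 0$. Hence $\tfrac{\lambda}{2}x^4+c\equiv 0$, so $\lambda=0$ and $c=0$; in particular $\pa_1 f\equiv 0$ and every gradient soliton is necessarily steady, which is the nontrivial direction of the asserted equivalence. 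I expect this compatibility argument, together with keeping track of which monomials in $x^2,x^3$ must cancel, to be the one delicate point.

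Finally, with $\lambda=0$ and $\pa_1 f=0$ the transverse equations reduce to $\pa_2 f,\pa_3 f$ constant and $\pa_2\pa_4 f=\pa_3\pa_4 f=0$, so that $f$ reduces to a function $n(x^4)$ up to an additive linear term in $x^2,x^3$ whose coefficients are further constrained by the $(4,4)$ component (using $a^2+b^2\ne 0$ once more) and drop out. The surviving $(4,4)$ equation is $2\pa_4^2 f+\varrho(\pa_4,\pa_4)=0$, i.e.\ $2n''(x^4)=(a+b)x^4+(s+p)$; integrating twice gives $n(x^4)=\frac{a+b}{12}(x^4)^3+\frac{p+s}{4}(x^4)^2+c_1x^4+c_2$, exactly the claimed potential. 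The converse is then immediate: for $\lambda=0$ and this $f$ one has $\pa_1 f=\pa_2 f=\pa_3 f=0$, so every Christoffel contribution to ${\rm Hes}_f$ vanishes and only the $(4,4)$ identity survives, which holds by construction; thus the steady soliton is realized as a gradient soliton with this potential, completing the equivalence.
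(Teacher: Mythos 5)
Your proof is correct and takes essentially the same approach as the paper: writing $2{\rm Hes}_f+\varrho=\lambda g$ in the coordinate frame produces exactly the PDE system displayed in the paper's proof, and your polynomial-compatibility argument (using $a^2+b^2\neq 0$ to force $\tfrac{\lambda}{2}x^4+c\equiv 0$, hence $\lambda=0$ and $\pa_1 f\equiv 0$) simply makes explicit the step the paper compresses into ``after solving the above system of PDEs''. The only imprecision --- shared with the paper's own claim that $f$ \emph{must} be the stated function --- is the degenerate subcase (e.g.\ $a\neq 0$, $b=q=s=0$), where the $(4,4)$ equation leaves the linear coefficient in $x^3$ unconstrained, so the potential is unique only generically; this affects neither the equivalence nor the validity of the exhibited potential.
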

\begin{proof}
Let $f=f(x^1,x^2,x^3,x^4)$ be a smooth function on $(M,g)$ and $v=\sum_{i=1}^4v^i\pa_i$ be a gradient Ricci soliton with the potential function $f$. So the coefficient $v^i$ must be $v^i = \sum_{j=1}^4g^{ij}\partial_j(f)$. By applying $v$ to the Equation \eqref{soliton} the following equations must establish
$$\left\{
\begin{array}{l}
f_{11}=f_{12}=f_{13}=f_{23}=0,\\
\lambda=2f_{14}=2f_{22}=2f_{33},\\
2f_{24}-2af_1 x^2x^4-2pf_1x^2-2qf_1x^3=0,\\
2f_{34}-2bf_1x^3x^4-2qf_1x^2-2sf_1x^3=0,\\
\lambda(x^2)^{2}(ax^4+p)+2\lambda qx^2x^3+\lambda(x^3)^{2}(bx^4+s)+(a+b)x^4+s+p-2f_{44}\\
+f_1(a(x^2)^{2}+b(x^3)^{2})-2f_2(ax^2x^4+px^2+qx^3)-2f_3(qx^2+bx^3x^4+sx^3)=0,
\end{array}\right.
$$
where $f_{i}:=\pa_if$. After solving the above system of PDEs we get that $\lambda$ must  be vanished  and $f$ must be the same function of the statement, this matter ends  the proof.
\end{proof}

To note that, from the above Theorem~\ref{thm3} we get $\nabla f=\frac{a+b}{4}(x^4)^2+\frac{p+s}{2}x^4+c_1$, which is a null vector field. This result is compatible with main Theorem in \cite{Br}.
\section{Homogeneous two-symmetric four-dimentional spaces}
Study of homogeneous spaces is one of the most interesting topics in differential geometry, where a deep connection between geometry and algebra appears. A (pseudo-)Riemannian manifold $M$ is called {\em homogeneous}, if for any points $p,q\in M$, there is an isometry $\phi$ of $M$ such that $\phi(p)=q$. In short, $I(M)$, the group of isometries of $M$, acts transitively on $M$. If $M$ is homogeneous, then evidently any geometrical properties at one point of $M$ holds at every point. For a detailed introduction to homogeneous spaces see e.g., \cite{SH,SK,AW}.

Homogeneous Riemannian structures introduced by Ambrose and Singer in \cite{WA} and deeply studied in \cite{FT}. The notation is  generalized to homogeneous pseudo-Riemannian structures by Gadea and Oubi\~na in \cite{PG}, in order to obtain a characterization of reductive homogeneous pseudo-Riemannian manifolds. A pretty application of homogeneous structures on three dimensional Lorentzian manifold is shown  in  \cite{GC}.

Let $(M, g)$ be a connected pseudo-Riemannian manifold, the following definition introduced by Gadea and Oubi\~na:
\begin{definition}\cite{PG}
A homogeneous pseudo-Riemannian structure on $(M, g)$ is a tensor field $T$ of type $(1, 2)$ on $M$,
such that the connection $\widetilde{\nabla}=\nabla -T$ satisfies
\begin{equation}\label{HS}
\begin{array}{ccc}
\widetilde{\nabla}g=0,\quad  \widetilde{\nabla}R=0,\quad  \widetilde{\nabla}T=0.
\end{array}
\end{equation}
\end{definition}

The above conditions are equivalent to the following system of equations which are famous as
Ambrose-Singer equations.
\begin{eqnarray}
g(T_XY,Z)+g(Y,T_XZ)=0,\label{AS1}\\
(\nabla_XR)_{YZ}=[T_X,R_{YZ}]-R_{{T_XY}Z}-R_{Y{T_XZ}},\label{AS2}\\
(\nabla_XT)_Y=[T_X,T_Y]-T_{T_XY}\label{AS3},
\end{eqnarray}
for all vector fields $X,Y,Z$.

Existence of homogeneous pseudo-Riemannian structures shows the homogeneity of the space. This fact is the subject of the following Lemma
\begin{lemma}\label{GOThm}\cite{PG}
Let $(M,g)$ be a simply connected and complete pseudo-Riemannian manifold, then
$(M,g)$ admits a pseudo-Riemannian homogeneous structure if and only if it is a reductive homogeneous pseudo-Riemannian manifold.
\end{lemma}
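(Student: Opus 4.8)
The plan is to prove the two implications of this classical Ambrose--Singer type characterization separately, treating the reductive-to-structure direction as routine and concentrating on the structure-to-reductive direction.

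First I would dispose of the easy implication. Suppose $(M,g)=G/H$ is reductive with a fixed decomposition $\mathfrak{g}=\mathfrak{h}\oplus\mathfrak{m}$, and let $\widetilde{\nabla}$ be the associated canonical connection. Being $G$-invariant, $\widetilde{\nabla}$ is metric and its torsion $\widetilde{T}$ and curvature $\widetilde{R}$ are $G$-invariant, hence $\widetilde{\nabla}$-parallel; moreover every $G$-invariant tensor field, in particular the Levi--Civita curvature $R$ and the difference tensor $T:=\nabla-\widetilde{\nabla}$, is $\widetilde{\nabla}$-parallel. Then $\widetilde{\nabla}g=0$, $\widetilde{\nabla}R=0$ and $\widetilde{\nabla}T=0$, which are exactly the conditions \eqref{HS}; rewriting them in terms of $\nabla$ and $T$ recovers the Ambrose--Singer equations \eqref{AS1}--\eqref{AS3}, so $T$ is a homogeneous pseudo-Riemannian structure.

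For the converse, assume a homogeneous structure $T$ is given and set $\widetilde{\nabla}=\nabla-T$. Equations \eqref{AS1}--\eqref{AS3} say precisely that $\widetilde{\nabla}$ is a metric connection whose torsion $\widetilde{T}$ and curvature $\widetilde{R}$ are both $\widetilde{\nabla}$-parallel. Fix $p\in M$, put $V=T_pM$ with the inner product $g_p$, and let $\mathfrak{h}\subseteq\mathfrak{so}(V,g_p)$ be the holonomy algebra of $\widetilde{\nabla}$ at $p$; since $\widetilde{R}$ is parallel, the holonomy theorem identifies $\mathfrak{h}$ with the Lie algebra generated by the operators $\widetilde{R}_p(x,y)$. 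I would then form the vector space $\mathfrak{g}=\mathfrak{h}\oplus V$ and equip it with the bracket extending the commutator on $\mathfrak{h}$, the evaluation $[A,x]=Ax$ for $A\in\mathfrak{h}$, $x\in V$, and, for $x,y\in V$, the bracket $[x,y]=-\widetilde{R}_p(x,y)-\widetilde{T}_p(x,y)$ with $-\widetilde{R}_p(x,y)\in\mathfrak{h}$ and $-\widetilde{T}_p(x,y)\in V$, as dictated by the canonical-connection model. The algebraic core is the Jacobi identity: the cases involving two or three elements of $\mathfrak{h}$ are formal, while the $\mathfrak{h}\times V\times V$ and $V\times V\times V$ cases reduce exactly to the first and second Bianchi identities for $\widetilde{\nabla}$ combined with the parallelism $\widetilde{\nabla}\widetilde{T}=0$ and $\widetilde{\nabla}\widetilde{R}=0$. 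This shows $\mathfrak{g}$ is a Lie algebra.

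The remaining, and genuinely hard, step is to integrate this infinitesimal model into a transitive isometric action. Let $G$ be the simply connected Lie group with Lie algebra $\mathfrak{g}$ and $H\subseteq G$ the connected subgroup integrating $\mathfrak{h}$. One realizes $\mathfrak{g}$ as a transitive algebra of infinitesimal isometries by using $\widetilde{\nabla}$-parallel transport together with the $\mathfrak{g}$-action on $V$ to build local affine isometries of $\widetilde{\nabla}$: completeness of $(M,g)$ is what allows these to be extended along every curve, and simple connectivity is what removes the monodromy so that the resulting action of $G$ is globally well defined. Since $V$ maps isomorphically onto $T_pM$, the orbit map is a submersion and the orbit is open; completeness then forces it to be closed, hence all of $M$, yielding a transitive action with isotropy algebra $\mathfrak{h}$ and reductive complement $V$. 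I expect this global integration---promoting the purely infinitesimal data to genuine, globally defined isometries---to be the main obstacle, with everything preceding it being linear algebra and Bianchi identities.
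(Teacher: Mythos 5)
The paper offers no proof of this lemma at all: it is quoted verbatim from Gadea--Oubi\~na \cite{PG} as a known result, so there is no internal argument to compare against. Measured instead against the proof in the cited literature, your sketch is essentially the standard one (Ambrose--Singer, as extended by Tricerri--Vanhecke and then \cite{PG} to arbitrary signature): the easy direction via the canonical connection of a reductive decomposition, and the converse via $\widetilde{\nabla}=\nabla-T$, the holonomy algebra $\mathfrak{h}$ (which, since $\widetilde{\nabla}\widetilde{R}=0$, is indeed the span of the operators $\widetilde{R}_p(x,y)$, closed under bracket by the derivation identity), the Nomizu-type bracket on $\mathfrak{g}=\mathfrak{h}\oplus T_pM$ with Jacobi reducing to the two Bianchi identities plus $\widetilde{\nabla}\widetilde{T}=\widetilde{\nabla}\widetilde{R}=0$, and finally the global integration using completeness and simple connectivity. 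One point deserves more care than your sketch gives it: in indefinite signature you cannot extend the local affine isometries by the Riemannian device of Killing fields having constant (hence bounded) length on a complete manifold, since $g(X,X)$ constant bounds nothing; the extension must go through the development argument for the connection $\widetilde{\nabla}$ (Kobayashi--Nomizu style), and your closing claim that ``completeness forces the orbit to be closed'' is better replaced by the observation that all orbits are open (the constructed Killing fields span every tangent space), so connectedness alone makes a single orbit of $M$. These are refinements of detail, not of strategy; your route is the one the cited source takes.
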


{\em Case1: Reductive cases:}

By applying the above lemma, we consider reductive homogeneous four-dimensional two-symmetric Lorentzian spaces. The result is the following theorem,

\begin{theorem}\label{thm1}
Every Lorentzian four-dimensional two-symmetric manifold is not reductive homogeneous.
\end{theorem}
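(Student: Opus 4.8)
The plan is to argue by contradiction using Lemma \ref{GOThm}. Every reductive homogeneous pseudo-Riemannian manifold carries its canonical connection as a homogeneous structure, so if $(M,g)$ were reductive homogeneous it would admit a $(1,2)$ tensor field $T$ solving the Ambrose--Singer equations \eqref{AS1}, \eqref{AS2}, \eqref{AS3}. I would therefore assume such a $T$ exists and derive a contradiction with $a^2+b^2\neq0$ by showing that the resulting system for the components of $T$ is inconsistent. Since the curvature \eqref{R} is known explicitly and $\nabla R\neq0$ (this is exactly the two-symmetry, non-parallel part forced by $a^2+b^2\neq0$), this reduces to pure linear algebra over the ring of smooth functions.

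First I would fix notation by writing $T_{\pa_i}\pa_j=\sum_k T^k_{ij}\pa_k$ for unknown smooth functions $T^k_{ij}$ on $M$, and lower the upper index, $T_{ijk}:=g_{kl}T^l_{ij}$. The metric compatibility \eqref{AS1} then says $T_{ijk}+T_{ikj}=0$, i.e. $T_{ijk}$ is skew in its last two indices. Because $g$ pairs $\pa_1$ with $\pa_4$ off-diagonally, is Euclidean on $\mathrm{span}(\pa_2,\pa_3)$, and satisfies $g_{11}=g_{12}=g_{13}=g_{23}=g_{24}=g_{34}=0$, this cuts the number of independent $T^k_{ij}$ roughly in half; in particular it forces relations such as $T^4_{i1}=0$ and couples the $\pa_1$-row to the $\pa_4$-row. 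This is the step that removes most of the freedom in $T$ before the curvature equation is even used.

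The heart of the matter is the curvature equation \eqref{AS2}. From \eqref{R} the only non-vanishing curvature operators are $R_{\pa_2\pa_4}$ and $R_{\pa_3\pa_4}$; both annihilate the parallel null field $\pa_1$, send $\pa_2,\pa_3$ into $\R\pa_1$, and send $\pa_4$ into $\mathrm{span}(\pa_2,\pa_3)$. Moreover the only explicit coordinate dependence of $R$ is on $x^4$, through $ax^4+p$ and $bx^4+s$, so $\nabla R$ reduces to $\nabla_{\pa_4}R$, whose leading coefficients are precisely the constants $a$ and $b$; for instance a direct check gives $(\nabla_{\pa_4}R)(\pa_2,\pa_4)\pa_2=a\,\pa_1$. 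Substituting $R$ and $\nabla R$ into \eqref{AS2} turns it into a family of linear equations in the $T^k_{ij}$ whose right-hand sides $[T_{\pa_i},R_{YZ}]-R_{T_{\pa_i}Y,Z}-R_{Y,T_{\pa_i}Z}$ are, in every component, combinations of the affine entries $ax^4+p$, $bx^4+s$ and $q$, because $R_{YZ}$ only ever outputs those and kills $\pa_1$.

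The hard part, and the only genuinely delicate bookkeeping, is to combine the many components of \eqref{AS2} with the skew-symmetry relations from \eqref{AS1} (and, if needed, the torsion equation \eqref{AS3}) and show that the overdetermined system cannot be solved. The structural reason I expect the inconsistency to surface is the tension between the two sides: the left-hand side $\nabla_{\pa_4}R$ carries the bare constants $a,b$, whereas the right-hand side is confined to the degenerate image of $R_{YZ}$ and reproduces only $x^4$-affine curvature entries, so reconciling the $\pa_1$-lines of \eqref{AS2} across all index choices forces $a=b=0$, contradicting $a^2+b^2\neq0$. Hence no homogeneous structure $T$ exists, and by Lemma \ref{GOThm} the manifold $(M,g)$ is not reductive homogeneous.
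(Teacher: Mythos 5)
There is a genuine gap at the heart of your argument: you locate the contradiction in the wrong Ambrose--Singer equation. Your structural prediction is that \eqref{AS1} and \eqref{AS2} alone form an inconsistent system whose $\partial_1$-lines force $a=b=0$. This is false, and the paper's proof shows why: the system \eqref{AS1}--\eqref{AS2} \emph{is} solvable, admitting solutions $T$ precisely when the parameters satisfy one of four constraints, namely $a=p=q=0$; or $a\neq0,\ q=0,\ s=\frac{bp}{a}$; or $b=a,\ s=p,\ q=0$; or $b=-a,\ s=-p$. None of these forces $a=b=0$ (for instance $b=-a\neq0$, $s=-p$ is perfectly compatible with $a^2+b^2\neq0$; note this is exactly the Ricci-flat case of Theorem 2.3, where the degenerate structure of $R$ that you describe leaves ample room for a solution). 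So the tension you invoke between the bare constants $a,b$ on the left of \eqref{AS2} and the $x^4$-affine entries on the right does not produce an inconsistency; it only constrains the parameters.

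The missing idea is that the third equation \eqref{AS3} (equivalently $\widetilde\nabla T=0$), which you relegate to a parenthetical ``if needed,'' is in fact indispensable: the paper substitutes each of the four families of solutions of \eqref{AS1}--\eqref{AS2} into \eqref{AS3} and derives the contradiction there. Concretely, in the first case the solution has nonconstant components such as $T^1_{14}=-T^4_{44}=\frac{b}{2(bx^4+s)}$, and a direct computation gives
\begin{equation*}
(\widetilde\nabla_{\partial_4}T)^4_{44}=\frac{3b^2}{4(bx^4+s)^2},
\end{equation*}
so \eqref{AS3} forces $b=0$, which together with $a=0$ in that case contradicts the nonvanishing of the matrix $H$ in the normal form \eqref{g0}; the other three families are eliminated similarly. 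Your overall strategy (assume $T$, invoke Lemma \ref{GOThm}, contradict $a^2+b^2\neq0$) matches the paper's, and your observations about \eqref{AS1} and about $\nabla R$ reducing to $\nabla_{\partial_4}R$ are correct, but as written the proposal's central step --- inconsistency already at the \eqref{AS2} stage --- would fail if you carried out the computation, and the argument cannot be completed without analyzing \eqref{AS3} case by case.
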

\begin{proof}
Let $(M,g)$ be a four-dimensional two-symmetric manifold. There exist local coordinates $(x^1,\dots,x^4)$ such that the metric $g$ is defined using the Equations \eqref{g}. According to the Lemma \ref{GOThm}, $(M,g)$ is (locally) reductive homogeneous if and only if the tensor field $T$ of type $(1,2)$ exists, such that the Ambrose-Singer equations satisfy.
Let $T_{\partial_i}\partial_j= T^k_{ij }\partial_k, 1\leq i,j,k\leq 4$ be a homogeneous structure on $(M, g)$ where $\partial_1=\partial_v, \partial_2=\partial_{x_1},\partial_3=\partial_{x_2},\partial_4=\partial_u$ and
$T^k_{ij}$ are smooth functions on $M$.\\
From the Equations \eqref{AS1} and \eqref{AS2}, besides the relations between the components $T^k_{ij}$, one of the following relations for the constants $a,b,p,q,s$ must be valid:
$$
\begin{array}{l}
{\bf 1:} a=p=q=0,\\
{\bf 2:} a\neq0,q=0,s=\frac{bp}{a},\\
{\bf 3:} b=a,s=p,q=0,\\
{\bf 4:} b=-a,s=-p,
\end{array}
$$
but each of these solutions makes a contradiction with Equation \eqref{AS3} (or equivalently with $\widetilde\nabla T=0$). For example, in the case {\bf $1$}, for the components $T^k_{ij}$ have
$$
\begin{array}{l}
T_{1j}^k=0,\ 1\leq j,k\leq 4,(j,k)\neq(4,1),\ T_{14}^1=-T_{44}^4=\frac{b}{2(bx^4+s)},\\
T_{2j}^k=0,\ 1\leq j,k\leq 4,\ k\neq 1,\\
T_{3j}^k=0,\ 1\leq j,k\leq 4,\ k\neq 1,\\
T_{4j}^1=0,\ 1\leq j\leq 3,\ T_{44}^1=\frac{b(x^3)^2}{2},\\
T_{4j}^k=-T_{kj}^1,\ 1\leq j\leq 4,\ 2\leq k\leq 3,\\
T_{4j}^4=0,\ 1\leq j\leq 3.
\end{array}
$$
On the other hand, we have
$$
\begin{array}{ll}
(\widetilde\nabla_{\partial_4}T)_{44}^4=&\partial_4T_{44}^4+T_{42}^4(T_{44}^2+x^2(p+ax^4)+qx^3)+T_{43}^4(T_{44}^3+x^3(s+bx^4)+qx^2)\\
&+T_{24}^4(x^2(p+ax^4)+qx^3)+T_{34}^4(x^3(s+bx^4)+qx^2)\\
&+T_{41}^4(T_{44}^1-\frac{a(x^2)^2+b(x^3)^2}{2})-T_{14}^4\frac{a(x^2)^2+b(x^3)^2}{2}+(T_{44}^4)^2.
\end{array}
$$
If we  substitute the previous solutions in the above relation we get $(\widetilde\nabla_{\partial_4}T)_{44}^4=\frac{3b^2}{4(bx^4+s)^2}$, and so the Equation \eqref{AS3} satisfies if $b=0$ which is a contradiction, since in this case the matrix $H$ in \eqref{g0} vanishes.
\end{proof}

{\em Case2: Non-reductive cases:}

Consider a homogeneous manifold $M=G/H$ (with $H$ connected), the Lie algebra $\g$ of $G$ ,  the isotropy subalgebra $\h$, and $\m = \g /\h$ the factor space, which identifies with a subspace of $\g$ complementary to $\h$. The pair $(\g, \h)$ uniquely defines the isotropy representation
$$\psi: \g \to \mathfrak{gl}(\m), \qquad \psi(x)(y) = [x,y]_{\m} \quad \text{for all} \quad x \in \g, y \in \m.$$

Given a basis $\{h_1, ..., h_r, u_1,...,u_n\}$ of  $\g$, where $\{h_j\}$ and $ \{u_i\}$ are bases of $\h $ and $\m$, respectively, a bilinear form on $\m$ is determined by the matrix $g$ of its components with respect to the basis $\{u_i\}$ and  is invariant if and only if $^t \psi(x) \circ g + g \circ \psi(x) = 0$ for all $x \in \h$. Invariant pseudo-Riemannian metrics $g$ on the homogeneous space $M=G/H$ are in a one-to-one correspondence with nondegenerate invariant symmetric bilinear forms $g$ on $\m$ \cite{K}.

Then, $g$ uniquely defines its invariant linear Levi-Civita connection, described in terms of the corresponding homomorphism of $\h$-modules $\Lambda : \g \to \mathfrak{gl}(\m)$, such that $\Lambda (x)(y_{\m}) = [x, y]_{\m}$ for all $x\in \h, y \in \g$. Explicitly, one has
$$\begin{array}{l}
\Lambda (x)(y_{\m}) = \frac{1}{2}[x,y]_{\m} +v(x,y), \qquad \text{for all} \; x,y \in \g ,
\end{array}$$
where $v: \g \times \g \to \m $ is the $\h$-invariant symmetric mapping uniquely determined by
$$2g (v(x, y), z_{\m}) = g(x_{\m}, [z, y]_{\m}) + g(y_{\m}, [z, x]_{\m}), \qquad \text{for all} \; x, y, z \in \g.
$$
The curvature tensor is then determined by
\begin{equation}\label{curv}
\begin{array}{rcl}
R:\m \times \m & \to & \mathfrak{gl}(\m) \\[4 pt] (x, y) & \mapsto & [\Lambda(x), \Lambda(y)]-\Lambda([x, y]).
\end{array}
\end{equation}
Finally, the Ricci tensor $\varrho$ of $g$, described in terms of its components with respect to $\{u_i\}$, is given by
\begin{equation}\label{ric}
\varrho (u_i, u_j) = \sum _{r=1} ^4 R_{ri} (u_r,u_j), \qquad i,j=1,...,4
\end{equation}
and the scalar curvature $\tau$ is the trace of $\varrho$.

Non-reductive homogeneous manifolds of dimension $4$ were classified in \cite{FR}, in terms of the corresponding non-reductive Lie algebras. The corresponding invariant pseudo-Riemannian metrics, together with their connection and curvature, were explicitly described in \cite{CF,CZ}. These spaces categorized in eight classes, $A_1,\dots,A_5,B_1,B_2,B_3$. The invariant metrics of types $A_1,A_2,A_3$ can be both of Lorentzian or neutral signature while the cases $A_4,A_5$ are always Lorentzian and cases $B_1,B_2,B_3$ admit the neutral signature $(2,2)$.

\begin{theorem}\label{thm2}
Every Lorentzian four-dimensional non-reductive homogeneous manifold is locally symmetric if and only if $\nabla^2 R=0$.
\end{theorem}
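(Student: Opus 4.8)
The plan is to exploit the classification of Lorentzian four-dimensional non-reductive homogeneous manifolds into the five families $A_1,\dots,A_5$ recalled above (the families $B_1,B_2,B_3$ being of neutral signature, hence irrelevant here), together with the explicit algebraic description of the invariant metric $g$, the connection homomorphism $\Lambda:\g\to\mathfrak{gl}(\m)$, and the curvature operator $R$ furnished by \eqref{curv} for each family. One implication is immediate: if $(M,g)$ is locally symmetric, that is $\nabla R=0$, then trivially $\nabla^2R=0$. Hence the entire content lies in the converse, namely that $\nabla^2R=0$ forces $\nabla R=0$; equivalently, that none of the families $A_1,\dots,A_5$ carries a \emph{proper} two-symmetric structure, one with $\nabla^2R=0$ but $\nabla R\neq0$.

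First I would compute $\nabla R$ purely algebraically. In an invariant setting the Levi-Civita connection is encoded by $\Lambda$, so that for invariant tensors the covariant derivative acts as the natural derivation built from $\Lambda$; concretely, in each family $(\nabla_{u_k}R)(u_i,u_j)$ is a universal expression in the structure constants together with the matrix entries of $\Lambda(u_k)$ and of $R(u_i,u_j)$. Carrying this out family by family yields $\nabla R$ as a collection of polynomials in the metric parameters of each $A_i$. I would then differentiate once more by the same rule to obtain $\nabla^2R$, again as explicit polynomials in those parameters.

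Next I would impose $\nabla^2R=0$. This produces, for each family, a system of polynomial equations in the parameters of $g$. Solving these systems and comparing the resulting parameter loci with the loci on which $\nabla R$ already vanishes is the crux of the argument: the assertion is precisely that the two loci coincide in every family, so that the condition $\nabla^2R=0$ never strictly enlarges the locally symmetric locus. A useful consistency check, which also shortens several cases, is structural. By the intrinsic classification of Lorentzian two-symmetric spaces a proper two-symmetric four-manifold must be of the pp-wave type \eqref{g}; such a metric admits a parallel null vector field, has vanishing scalar curvature, and carries a degenerate Ricci tensor of the very restricted form recorded in \eqref{R} and \eqref{ric}. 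Matching these invariants against those computed for each $A_i$ allows one to exclude the proper case directly, in the same spirit as Theorem \ref{thm1}, which already rules out the reductive homogeneous realizations.

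The main obstacle I anticipate is the sheer bulk and bookkeeping of the family-by-family computation: the families $A_4$ and $A_5$ in particular carry several free parameters, and both $\nabla R$ and $\nabla^2R$ have many components, so organizing the polynomial systems and verifying that every solution of $\nabla^2R=0$ already satisfies $\nabla R=0$, without sign or indexing errors, is delicate. The structural pp-wave comparison is what I would rely upon to keep the case analysis honest and to guard against a spurious branch being overlooked.
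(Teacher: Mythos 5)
Your proposal is correct and takes essentially the same route as the paper: reduce to the Lorentzian families $A_1,\dots,A_5$ of the Fels--Renner classification with the invariant data of \cite{CF}, compute $\Lambda$, the curvature, and its covariant derivatives algebraically, and verify family by family that the parameter loci $\{\nabla R=0\}$ and $\{\nabla^2R=0\}$ coincide (the paper carries out only the case $A_1$ in detail, where both conditions reduce to $b=0$, and asserts the remaining Lorentzian cases are similar). Your supplementary pp-wave invariant comparison is an extra safeguard the paper does not use, but the core argument is identical.
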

\begin{proof}
Let $(M,g)$ be a Lorentzian non-reductive homogeneous four-dimensional manifold, then $(M,g)$ is isometric to a homogeneous space $G/H$ equipped with a Lorentzian invariant metric $g$, where the corresponding Lie algebras $\g$ and $\h$ are described in the cases $A_1,\dots,A_5$ of \cite{CF}.

We bring the details of the case $A_1$. The other cases can be treated in the similar way. In this case, $\g=\A _1$ is the decomposable $5$-dimensional Lie
algebra $\mathfrak{sl}(2,\mathbb R) \oplus \mathfrak{s} (2)$, where $\mathfrak s (2)$ is the $2$-dimensional solvable algebra. There exists a basis $\{e_1,...,e_5\}$ of $\A _1$, such that the non-zero products are:
$$[e_1,e_2]=2e_2, \quad [e_1,e_3]=-2e_3, \quad [e_2,e_3]=e_1, \quad [e_4,e_5]=e_4$$
and the isotropy subalgebra is $\h={\rm Span}\{h_1=e_3+e_4\}$. So, we can take
$$\m= {\rm Span}\{u_1=e_1,u_2=e_2,u_3=e_5,u_4=e_3-e_4\}.$$
With respect to $\{u_i\}$, we have the following isotropy representation $H_1$ for $h_1$ and consequently the following description of invariant metric $g$:
\begin{equation}\label{gA1}
H_1=\left(\begin{array}{cccc}
0 & -1 & 0 & 0 \\
0 & 0 & 0 & 0 \\
0 & 0 & 0 & 0 \\
1 & 0 & -\frac 12 & 0
\end{array}\right), \qquad g=\left(\begin{array}{cccc}
a & 0 & -\frac{a}{2} & 0 \\
0 & b & c & a \\
-\frac{a}{2} & c & d & 0 \\
0 & a & 0 & 0
\end{array}\right),
\end{equation}
which is nondegenerate whenever $a (a-4d)\neq 0$ and is Lorentzian if and only if $a(a-4d)<0$. Putting $\Lambda[i]:=\Lambda (u_i)$ for all indices $i=1,...,4$, we find:
{\small \begin{align}\label{lambdaA1}
\Lambda[1]=\begin{pmatrix}
0 & 0 & 0 & 0 \\ 0 & 1 & 0 & 0 \\ 0 & 0 & 0 & 0 \\ 0 & -\frac{b}{a} & -\frac{c}{a} & -1\end{pmatrix},  \qquad &\Lambda[2]=\begin{pmatrix}
0 & -\frac{8bd}{a(a - 4d)} & \frac{c}{a} &1 \\ -1 & 0 & \frac 12 & 0 \\ 0 & -\frac{4bc}{a(a - 4d)} & 0 & 0 \\ \noalign{\smallskip} -\frac{b}{a} & \frac{4bc}{a(a - 4d)} & -\frac{b}{2a} & 0\end{pmatrix}, \\
\Lambda[3]=\begin{pmatrix}
0 & \frac{c}{a} & 0 & 0 \\ \noalign{\smallskip} 0 & \frac 12 & 0 & 0 \\ 0 & 0 & 0 & 0 \\ -\frac{c}{a} & -\frac{b}{2a} &  0 & -\frac 12 \end{pmatrix}, \qquad &\Lambda[4]=0. \notag
\end{align}}
Moreover, applying \eqref{curv} and \eqref{ric}, by setting $R_{ij}=R(u_i,u_j)$, some standard calculations give that with respect to $\{u_i\}$, the non-zero curvature components are determined as following:
$$
\begin{array}{ll}
R_{12}=\left( \begin {array}{cccc} 0&{\frac {b \left( 20\,d+a \right) }{a
 \left( -4\,d+a \right) }}&-{\frac {c}{a}}&-1\\ 1&0&
-\frac{1}{2}&0\\ 0&{\frac {12b}{-4\,d+a}}&0&0
\\[2pt] {\frac {4b}{a}}&-{\frac {12cb}{a \left( -4\,d
+a \right) }}&{\frac {b}{a}}&0\end {array} \right),&
R_{13}=\left( \begin {array}{cccc} 0&-{\frac {c}{a}}&0&0
\\ 0&0&0&0\\ 0&0&0&0
\\ {\frac {c}{a}}&0&-{\frac {c}{2a}}&0
\end {array} \right),\\[20pt]
R_{14}=\left( \begin {array}{cccc} 0&-1&0&0\\ 0&0&0&0
\\ 0&0&0&0\\ 1&0&-\frac{1}{2}&0
\end {array} \right),&
R_{23}=\left( \begin {array}{cccc} 0&-{\frac {b \left( 4\,d+a \right) }
{2a \left( -4\,d+a \right) }}&-{\frac {c}{2a}}&-\frac{1}{2}
\\ \frac{1}{2}&-{\frac {c}{a}}&-\frac{1}{4}&0\\[2pt] 0&
-{\frac {2b}{-4\,d+a}}&0&0\\[2pt] -{\frac {b}{a}}&{
\frac {cb \left( -4\,d+3\,a \right) }{{a}^{2} \left( -4\,d+a \right) }
}&{\frac {{c}^{2}}{{a}^{2}}}&{\frac {c}{a}}\end {array} \right),\\[20pt]
R_{24}=\left( \begin {array}{cccc} 0&0&0&0\\ 0&-1&0&0
\\ 0&0&0&0\\ 0&{\frac {b}{a}}&{
\frac {c}{a}}&1\end {array} \right),&
R_{34}= \left( \begin {array}{cccc} 0&\frac{1}{2}&0&0\\ 0&0&0&0
\\ 0&0&0&0\\ -\frac{1}{2}&0&\frac{1}{4}&0
\end {array} \right),
\end{array}
$$
and the Ricci tensor $\varrho$ is determined by
$$\label{roA1}
\varrho =\left( \begin {array}{cccc} -2&0& 1 &0\\\noalign{\smallskip}0&\,{\frac {2b (a+12d)}{a \left(a -4d \right) }}&-\,{\frac {2c}{a}}&-2\\\noalign{\smallskip} 1 & - {\frac {2c}{a}}&-\frac 12&0\\\noalign{\smallskip}0&-2&0&0\end {array} \right).
$$
By using description of the curvature tensor and Levi-Civita connection, we set $\nabla R=0$ and have, the homogeneous spaces $G/H$ is locally symmetric if and only if $b=0$. Also, $\nabla^2R=0$ if and only if $b=0$, so we conclude that the homogeneous spaces of type $A_1$ are locally symmetric if and only if $\nabla^2R=0$. Similar arguments will be applied for the other Lorentzian cases and this finishes the proof.
\end{proof}

The following remark is the direct conclusion of the Theorems \ref{thm1} and \ref{thm2}.
\begin{rem}
Let $(M,g)$ be a homogeneous Lorentzian four-dimensional manifold, then $(M,g)$ is not a two-symmetric space.
\end{rem}

Classification of four-dimensional pseudo-Riemannian homogeneous spaces with non-trivial isotropy has been studied in \cite{K}  in order to find the local classification of four-dimensional Einstein-Maxwell homogeneous spaces with an invariant pseudo-Riemannian metric of arbitrary signature. The presented list is a good reference to study pseudo-Riemannian homogeneous four-manifolds. We apply the mentioned classification to find an example of a pseudo-Riemannian two-symmetric homogeneous four-manifold, equipped with an invariant metric of {\em neutral} signature.
\begin{ex}
Let $M=G/H$ be a homogeneous four-dimensional manifold of type $1.4^1:9$ of the Komrakov's list \cite{K}. In this case, the Lie algebras $\g$ and $\h$ are described as following:
$$
\begin{array}{ll}
\g=\left\{\left.\left(\begin{array}{cccc}x&0&0&0\\0&0&\lambda x&0\\0&x&x&0\\0&0&x&-\mu x\end{array}\right)\right|x\in\R,\lambda,\mu\in\R\right\}\rightthreetimes(\mathfrak n_3\times \R),& \h=\langle p\rangle,
\end{array}
$$
where $\mathfrak n_3=\langle h,p,q\rangle$ with the only non-zero bracket $[p,q]=h$. If $\g={\rm span}\{u_1,u_2,u_3,u_4\}$ and $\h={\rm span}\{h_1\}$, the table of Lie brackets is:
$$
\begin{array}{c|ccccc}
[\ ,\ ]&h_1&u_1&u_2&u_3&u_4\\
\hline
h_1&0&0&u_1&u_2&0\\
u_1&0&0&0&u_1&0\\
u_2&-u_1&0&0&\lambda h_1+u_2+u_4&0\\
u_3&-u_2&-u_1&-\lambda h_1-u_2-u_4&0&\mu u_4\\
u_4&0&0&0&-\mu u_4&0
\end{array}
$$
The invariant metric will be calculated as following
$$
g= \left( \begin {array}{cccc} 0&0&-a&0\\ 0&a&0&0
\\ -a&0&b&c\\0&0&c&d
\end {array} \right),
$$
for arbitrary real constants $a\neq 0,b,c,d$. This metric admits both Lorentzian and neutral signatures while for $d=-9a$, $g$ is of neutral signature. We also set $\mu=-\frac 52$ and $\lambda=\frac 34$. Keeping in mind $\Lambda[i]=\Lambda(u_i)$ for all indices $i=1,\dots,4$, the components of the Levi-Civita connection are:
$$
\begin{array}{ll}
\Lambda[1]=0,&\Lambda[2]=\left( \begin {array}{cccc} 0&1&{\frac {c}{2a}}&-\frac 92
\\ 0&0&1&0\\ 0&0&0&0
\\ 0&0&\frac 12&0\end {array} \right),\\
\Lambda[3]= \left( \begin {array}{cccc} -1&{\frac {c}{2a}}&{\frac {6\,ba
-{c}^{2}}{6{a}^{2}}}&{\frac {5c}{2a}}\\[4pt] 0&0&{
\frac {c}{a}}&-\frac 92\\ 0&0&1&0\\ 0&-
\frac 12&-{\frac {c}{6a}}&0\end {array} \right),&
\Lambda[4]=\left( \begin {array}{cccc} 0&-\frac 92&{\frac {5c}{2a}}&-{\frac {45}{2
}}\\[4pt] 0&0&-\frac 92&0\\ 0&0&0&0
\\ 0&0&\frac 52&0\end {array} \right),
\end{array}
$$
also, by using Equation \eqref{curv}, if set $R_{ij}=R(u_i,u_j)$, the non-zero components of the curvature tensor are
$$
\begin{array}{ll}
R_{23}=\left( \begin {array}{cccc} 0&6&-{\frac {2c}{a}}&18
\\ 0&0&6&0\\ 0&0&0&0
\\ 0&0&-2&0\end {array} \right),&
R_{34}=\left( \begin {array}{cccc} 0&-18&{\frac {6c}{a}}&-54
\\ 0&0&-18&0\\ 0&0&0&0
\\ 0&0&6&0\end {array} \right).
\end{array}
$$
The space is Ricci flat and the only non-zero covariant derivative of the curvature tensor is in the direction of $u_3$. We set $(\Lambda[k]R)_{ij}=(\Lambda(u_k)R)(u_i,u_j)$, the non-zero components are
$$
\begin{array}{ll}
(\Lambda[3]R)_{23}=\left( \begin {array}{cccc} 0&6&-{\frac {2c}{a}}&18
\\ 0&0&6&0\\ 0&0&0&0
\\ 0&0&-2&0\end {array} \right),&
(\Lambda[3]R)_{34}=\left( \begin {array}{cccc} 0&-18&{\frac {6c}{a}}&-54
\\ 0&0&-18&0\\ 0&0&0&0
\\ 0&0&6&0\end {array} \right).
\end{array}
$$
Clearly, $(M=G/H,g)$ is never locally symmetric but by straight forward calculations we get $\nabla^2R=0$ which shows that the spaces is two-symmetric.
\end{ex}

\end{document}